\title[Pseudo-Anosov maps not arising from Penner's construction]{Pseudo-Anosov mapping classes not arising from Penner's construction}
\author{Hyunshik Shin}
\address{Department of Mathematics, Statistics, and Computer Science\\
University of Illinois at Chicago\\\newline
Chicago, IL 60607\\USA}
\email{shin@math.uic.edu}
\urladdr{http://www.math.uic.edu/~shin}
\author{Bal\'azs Strenner}
\address{Department of Mathematics\\University of
  Wisconsin--Madison\\\newline
  Madison, WI 53705\\USA
}
\email{strenner@math.wisc.edu}
\urladdr{http://math.wisc.edu/~strenner}
\newtheorem{thm}{Theorem}[section]
\newtheorem*{conj}{Conjecture (Penner, 1988)}
\newtheorem*{pennerconst*}{Penner's Construction}
\newtheorem*{mainthm*}{Main Theorem}
\newtheorem{lemma}[thm]{Lemma}
\newtheorem{cor}[thm]{Corollary}
\newtheorem{prop}[thm]{Proposition}
\newtheorem{question}[thm]{Question}
\newcommand{\N}{\mathbb{N}}
\renewcommand{\v}{\mathbf{v}} 
\newcommand{\F}{\mathcal{F}}
\newcommand\tS{\widetilde{S}}
\newcommand\SL{\mathrm{SL}}
\newcommand\Mod{\mathrm{Mod}}
\newcommand\PSL{\mathrm{PSL}}
\newcommand\mC{\mathcal{C}}
\newcommand\Isom{\mathrm{Isom}}
\begin{document}

\begin{abstract}    
  We show that Galois conjugates of stretch factors of pseudo-Anosov
  mapping classes arising from Penner's construction lie off the unit
  circle. As a consequence, we show that for all but a few exceptional
  surfaces, there are examples of pseudo-Anosov mapping classes so
  that no power of them arises from Penner's construction. This
  resolves a conjecture of Penner.
\end{abstract}

\maketitle


\section{Introduction}

Let $S_{g,n}$ be the orientable surface of genus $g$ with $n$
punctures. The mapping class group $\Mod(S_{g,n})$ is the group of
isotopy classes of orientation-preserving homeomorphisms of $S_{g,n}$.
Thurston's classification theorem \cite{Thurston88} states that each
element of $\Mod(S_{g,n})$ is either periodic, reducible, or
pseudo-Anosov. An element $f \in \Mod(S_{g,n})$ is pseudo-Anosov if
there is a representative homeomorphism $\psi$, a number $\lambda>1$,
and a pair of transverse invariant singular measured foliations $\F^u$
and $\F^s$ such that
\begin{displaymath}
  \psi(\F^u) = \lambda\F^u \quad \mbox{and} \quad \psi(\F^s) =
  \lambda^{-1}\F^s.
\end{displaymath}
The number $\lambda$ is called the stretch factor (or dilatation) of $f$.

Isotopy classes of orientation-preserving Anosov maps of the torus can
easily be classified as actions of matrices $M \in \SL(2,\Z)$ with
$|\mathrm{tr}(M)|>2$ on $\R^2/\Z^2$. However, it is much harder to
give explicit examples of pseudo-Anosov maps on more complicated
surfaces.

Thurston gave the first general construction of pseudo-Anosov mapping
classes in terms of Dehn twists \cite{Thurston88}. After Thurston's
work, various other constructions have been developed
\cite{ArnouxYoccoz81, Kra81, Long85, Penner88, CassonBleiler88,
  BestvinaHandel92}. In this paper, we study Penner's construction
\cite{Penner88}.

\begin{pennerconst*}\label{thm:penners_construction}
  Let $A = \{a_1, \ldots, a_n\}$ and $B = \{b_1, \ldots, b_m\}$ be a
  pair of multicurves on a surface $S$. Suppose that $A$ and $B$ are
  filling, that is, $A$ and $B$ are in minimal position and the
  complement of $A\cup B$ is a union of disks and once punctured disks.
  Then any product of positive Dehn twists about $a_j$ and negative
  Dehn twists about $b_k$ is pseudo-Anosov provided that all $n+m$
  Dehn twists appear in the product at least once.
\end{pennerconst*}

Penner \cite{penner1991bounds} used this construction to give examples
of pseudo-Anosov mapping classes with small stretch factors. (See also
\cite{bauer1992upper} and \cite{Leininger04} for more work on small
stretch factors arising from Penner's construction.)

Pseudo-Anosov maps arising from Penner's construction fix the
singularities and separatrices of their invariant foliations, and
therefore not all pseudo-Anosov mapping classes arise from Penner's
construction. However, since the construction is fairly general,
Penner conjectured the following.

\begin{conj}\label{conj:penners_conjecture}
  Every pseudo-Anosov mapping class has a power that arises from Penner's construction.
\end{conj}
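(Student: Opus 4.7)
The plan is to refute Penner's conjecture by producing an algebraic obstruction to being (a power of) a Penner pseudo-Anosov, and then exhibiting pseudo-Anosov mapping classes that violate the obstruction. Concretely, I would prove two things: (a) the stretch factor of every pseudo-Anosov map arising from Penner's construction has all of its Galois conjugates off the unit circle, and (b) there exist pseudo-Anosov mapping classes whose stretch factor has a Galois conjugate on the unit circle. Since a complex number lies on the unit circle if and only if any of its positive powers does, (a) is automatically a property invariant under taking powers, so together (a) and (b) contradict the conjecture.

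For (a), the plan is to set up a linear algebra model of Penner's construction. Given a filling pair $A=\{a_1,\ldots,a_n\}$, $B=\{b_1,\ldots,b_m\}$, each Dehn twist $T_{a_j}$ or $T_{b_k}^{-1}$ acts on an appropriate weight/train-track space as a specific \emph{nonnegative} elementary matrix whose nontrivial entries are read off from the geometric intersection matrix $N_{ij}=i(a_i,b_j)$. A Penner product, in which every generator appears at least once, is thus a product of such matrices with a prescribed sparsity pattern. I would try to show that such a product is, up to a harmless change of basis, similar to an \emph{everywhere positive} matrix (or can be blocked into such a form). The classical Perron--Frobenius theorem gives one root of modulus equal to the spectral radius; the strengthening I need is that none of the other Galois conjugates of that Perron root has modulus exactly equal to $1$. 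I would prove this by bounding the nonleading eigenvalues strictly away from the unit circle using a quantitative version of Perron--Frobenius for products of positive matrices, or alternatively by arguing directly with the characteristic polynomial: the coefficients have controlled signs because the matrices are nonnegative, and a conjugate on the unit circle would force a cyclotomic-like factorization incompatible with strict positivity.

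For (b), the plan is to exhibit pseudo-Anosov maps whose stretch factors are \emph{Salem numbers}, which by definition are algebraic integers $\lambda>1$ whose other Galois conjugates consist of $1/\lambda$ together with algebraic numbers on the unit circle. Thurston's original construction produces pseudo-Anosovs whose stretch factor is the Perron eigenvalue of $NN^{T}$ for a bipartite intersection matrix, and it is known that suitable choices of intersection data yield Salem numbers. The remaining task is to verify that such examples can be realized on $S_{g,n}$ for all but a small list of exceptional surfaces --- the exceptions being small-complexity surfaces where the set of available pseudo-Anosov maps is too restricted to contain a Salem-stretch-factor example.

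The main obstacle I foresee is step (a): showing that the Perron root of every admissible Penner product has no Galois conjugate on the unit circle. The combinatorial variety of Penner products is large, and one needs a uniform argument that exploits both the sign pattern of the factors and the filling condition on $(A,B)$. My expectation is that the filling condition is what converts the nonnegative factors into an effectively positive product after finitely many multiplications, so that a quantitative Perron--Frobenius estimate can be applied. Once (a) is in hand, (b) and the power-invariance remark are comparatively routine, and together they give examples of pseudo-Anosov mapping classes no power of which can be a Penner map, disproving the conjecture.
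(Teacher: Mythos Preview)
Your overall two-step plan---(a) show that stretch factors of Penner products have no Galois conjugate on the unit circle, (b) exhibit pseudo-Anosov maps whose stretch factor \emph{does} have such a conjugate, and then use power-invariance to conclude---is exactly the strategy of the paper. So the architecture is right.

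The genuine gap is in your mechanism for (a). Neither Perron--Frobenius nor ``quantitative Perron--Frobenius'' says anything about eigenvalues lying on the unit circle; it only separates the top eigenvalue from the rest in modulus. In fact your hoped-for statement is simply false for positive integer matrices in general: by Lind's theorem every Perron number, in particular every Salem number, is the spectral radius of some primitive nonnegative integer matrix $M$; taking a high enough power makes $M^k$ strictly positive, and $M^k$ still has eigenvalues on the unit circle. So positivity alone, and the filling condition used only to force eventual positivity, cannot yield (a). Your alternative, a ``cyclotomic-like factorization incompatible with strict positivity,'' is the same wish restated. What is actually needed is a feature of the \emph{specific} generators $Q_i = I + D_i\Omega$, not of arbitrary products of positive matrices.

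The paper's missing idea is a Lyapunov-type argument. One uses the intersection matrix $\Omega$ itself to define the quadratic form $h(\v)=\tfrac12\v^{T}\Omega\v$ and checks the identity $h(Q_i\v)-h(\v)=\|Q_i\v-\v\|^2$ (this uses $D_i\Omega D_i=0$, i.e.\ that $\Omega$ has zero diagonal). Hence every $M\in\langle Q_i\rangle$ satisfies $h(M\v)\ge h(\v)$ with equality iff $M\v=\v$. An eigenvalue $\mu\ne 1$ on the unit circle would give a vector $\v$ with $M\v\ne\v$ but $M^{p_i}\v\to\v$ along a subsequence, contradicting strict monotonicity of $h$. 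This is short and uses exactly the extra structure you were not exploiting.

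For (b) your instinct to look for Salem stretch factors is fine, though ``Perron eigenvalue of $NN^{T}$'' is not quite the right description of Thurston's stretch factors, and you would still need to populate all $S_{g,n}$ with $3g+n\ge 5$. The paper handles this by starting from explicit coronal examples on $S_{2,0}$ and $S_{0,5}$ and propagating them via branched covers and added punctures; that part is routine once one example on a small surface is in hand.
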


The conjecture is listed as Problem 4 in Chapter 7 of \cite{Farb06}
and also discussed briefly in Section 14.1.2 of \cite{FarbMargalit12}.

It is a folklore theorem that Penner's construction is true for
$S_{1,0}$ and $S_{1,1}$ and that it is false for $S_{0,4}$, but a
modified version of the conjecture, allowing half-twists in addition
to Dehn twists in Penner's construction, is true. To the best of our
knowledge, no proof of this has appeared in the literature. In the
appendix, we give a proof by considering the action of the mapping
class group on the curve complex. The main result of this paper is the
answer to Penner's conjecture in the remaining nontrivial cases.

We call a pseudo-Anosov mapping class and its stretch factor $\lambda$
\emph{coronal} if $\lambda$ has a Galois conjugate on the unit circle.

\begin{mainthm*}\label{thm:penner_false}
  A coronal pseudo-Anosov mapping class has no power coming from
  Penner's construction. Moreover, there exists a coronal
  pseudo-Anosov mapping class on $S_{g,n}$ when $3g+n \ge 5$. In
  particular, Penner's conjecture is false for $S_{g,n}$ when
  $3g+n \ge 5$.
\end{mainthm*}

We remark that even the modified version of the conjecture, allowing
half-twists in addition to Dehn twists in Penner's construction, is
false for $S_{0,n}$ when $n \ge 5$.

The proof of the first part of the Main Theorem is based on the
fact that stretch factors of pseudo-Anosov mapping classes arising
from Penner's construction appear as Perron--Frobenius eigenvalues of
products of certain integral matrices, which depend only on the
intersection numbers of curves. We show that such matrix products may
not have eigenvalues on the unit circle other than 1, which implies
that pseudo-Anosov stretch factors arising from Penner's construction
are not coronal. 

The key idea is that an eigenvalue on the unit circle
corresponds to a rotation on an invariant plane, which we consider a
dynamical system. Our topological setting provides a natural quadratic
form $h$ which, considered as a height function, plays a role similar
to that of Lyapunov functions in stability theory. We show that the
products of matrices arising from Penner's construction act by
increasing the height, which prohibits rotations on subspaces.

To prove the second part of the Main Theorem, we use known
coronal pseudo-Anosov mapping classes on $S_{2,0}$ and $S_{0,5}$ to
construct coronal pseudo-Anosov mapping classes on the rest of the
surfaces via introducing punctures and taking branched covers.

The Main Theorem provides a number-theoretical obstruction for
pseudo-Anosov maps to arise from Penner's construction: if the stretch
factor of $f$ has a Galois conjugate on the unit circle, then no power
of $f$ can arise from Penner's construction. We do not know whether
there are other obstructions.

\begin{question}
  Let $f$ be a pseudo-Anosov mapping class whose stretch factor does
  not have Galois conjugates on the unit circle. Does $f^n$ arise from
  Penner's construction for some $n \in \N$?
\end{question}

\paragraph{Acknowledgements} The authors are grateful to Richard Kent and
Dan Margalit for numerous helpful conversations and invaluable
comments. We also thank Richard Kent for suggesting the term
\emph{coronal} and the referee for many helpful comments.

\section{Proof of the Main Theorem}
\subsection{Stretch factors arising from Penner's construction}
Let $A =\{a_1,\ldots, a_n\}$ and $B = \{b_1,\ldots,
b_m\}$ be a pair of multicurves on a surface. Introduce the
notation
\begin{displaymath}
(e_1, \ \ldots \,, \ e_{n+m}) = (a_1, \ \ldots, \ a_n, \, b_1, \ \ldots, \ b_m).
\end{displaymath}
The \emph{intersection matrix}
of $A$ and $B$ is the symmetric $(n+m)\times (n+m)$ nonnegative
integral matrix $\Omega = \Omega(A,B)$ whose $(j,k)$-entry is the 
geometric intersection number $i(e_j,e_k)$.

\paragraph{The monoid $\Gamma(\Omega)$}
Penner showed that actions of the Dehn twists $T_{a_j}$ and $T_{b_k}^{-1}$ on
$A \cup B$ can be described by the matrices 
\begin{displaymath}
  Q_i = I + D_i\Omega \quad (1\le i \le n+m),
\end{displaymath}
where $I$ is the $(n+m)\times (n+m)$ identity matrix, and $D_i$
denotes the $(n+m)\times (n+m)$ matrix whose $i$th entry on the
diagonal is 1 and whose other entries are zero. 
Any product of $T_{a_j}$ and $T_{b_k}^{-1}$, where each $a_j$ and each $b_k$
appear at least once, is pseudo-Anosov,
and its stretch factor is given by the Perron--Frobenius eigenvalue of the
corresponding product of the matrices $Q_i$. Therefore one can study
pseudo-Anosov stretch factors arising from Penner's construction by
studying the monoid
\begin{displaymath}
\Gamma(\Omega) = \langle Q_i : 1\le i \le n+m\rangle,
\end{displaymath}
generated by the matrices $Q_i$ depending on $\Omega$. For more
details, see \cite{Penner88}.

\paragraph{The height function $h$} Define the quadratic form
$h\co \R^{n+m}\to\R$ by the equation
\begin{displaymath}
  h(\v) = \frac12 \v^T \Omega \v.
\end{displaymath}
Geometrically, the vector $\v$ corresponds to assigning a real number
to each curve in $A$ and $B$. The function $h$ is the sum of the
products of the values of intersecting curves over all intersection
points.

The multicurves $A$ and $B$ define two transverse cylinder
decompositions of the surface. When $\v>0$, the values assigned to the
curves can be thought of as the widths of the cylinders. This way we
get a singular flat metric on the surface with a rectangle
corresponding to each intersection, and the area of this flat surface
is $h(\v)$. When $\v$ is not positive, one can still think of $h(\v)$
as a signed area. However, it is not clear how this geometric
interpretation explains the following interaction between the function
$h(\v)$ and the matrices $Q_i$.

\begin{prop}
  $h(Q_i\v) - h(\v) = ||Q_i\v-\v||^2$.
\end{prop}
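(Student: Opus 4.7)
The plan is to prove this by direct algebraic manipulation, exploiting two structural features of the setup: the symmetry of $\Omega$ and the vanishing of its diagonal entries (since $i(e_j,e_j)=0$ for a simple closed curve).

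First I would rewrite the right-hand side. Since $Q_i = I + D_i\Omega$, we have $Q_i\v-\v = D_i\Omega\v$, and so
\begin{displaymath}
  \|Q_i\v-\v\|^2 = \v^T \Omega^T D_i^T D_i \Omega \v = \v^T \Omega D_i \Omega \v,
\end{displaymath}
using $\Omega^T = \Omega$, $D_i^T = D_i$, and the fact that $D_i^2 = D_i$ because $D_i$ is a diagonal $0/1$ matrix.

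Next I would expand the left-hand side. Writing $h(Q_i\v) = \frac12 \v^T Q_i^T \Omega Q_i \v$ with $Q_i^T = I + \Omega D_i$ and multiplying out produces
\begin{displaymath}
  h(Q_i\v) - h(\v) = \v^T \Omega D_i \Omega \v + \tfrac12 \v^T \Omega D_i \Omega D_i \Omega \v.
\end{displaymath}
Matching this with the right-hand side reduces the proposition to showing that the cubic-in-$\Omega$ correction term vanishes.

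The key step, which I expect is the only non-routine part, is observing that $D_i \Omega D_i = 0$. This holds because $D_i \Omega D_i$ is the matrix whose only possibly nonzero entry is the $(i,i)$-entry, which equals $\Omega_{ii} = i(e_i,e_i) = 0$. Consequently $\Omega D_i \Omega D_i \Omega = \Omega(D_i\Omega D_i)\Omega = 0$, the quadratic correction disappears, and both sides equal $\v^T \Omega D_i \Omega \v$. The geometric content that the argument really uses is therefore the single fact that the curves $e_i$ have no self-intersections; everything else is bookkeeping with the symmetry of $\Omega$.
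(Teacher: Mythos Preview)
Your argument is correct and essentially identical to the paper's: both proofs expand $Q_i^T\Omega Q_i$ using $Q_i = I + D_i\Omega$, use $D_i\Omega D_i = 0$ (the vanishing diagonal of $\Omega$) to kill the extra term, and identify $\|Q_i\v - \v\|^2 = \|D_i\Omega\v\|^2 = \v^T\Omega D_i\Omega\v$. The only cosmetic difference is that the paper invokes $D_i\Omega D_i = 0$ before expanding, while you expand fully and then eliminate the cubic term.
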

\begin{proof}
  Since all entries on the diagonal of $\Omega$ are
  zero, we have $D_i\Omega D_i = 0$ for all $i$, and hence we have
  \begin{displaymath}
    \frac12 Q_i^T\Omega Q_i - \frac12 \Omega = \frac12 (I+\Omega D_i) \Omega (I + D_i\Omega)
    -\frac12 \Omega = \Omega D_i \Omega.
  \end{displaymath}
  It follows that
  $h(Q_i\v) - h(\v) = ||D_i\Omega\v||^2 = ||Q_i\v-\v||^2$.
\end{proof}

\begin{cor}\label{cor:increasing}
  If $M\in \Gamma(\Omega)$, then $h(M\v) \ge h(\v)$ with equality if
  and only if 
  $M\v = \v$. 
\end{cor}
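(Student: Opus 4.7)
The plan is a straightforward induction on the word length of $M \in \Gamma(\Omega)$, using the preceding proposition as the base case.

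First I would write $M = Q_{i_k} Q_{i_{k-1}} \cdots Q_{i_1}$ as a product of generators and set $\v_0 = \v$, $\v_j = Q_{i_j} \v_{j-1}$, so that $\v_k = M\v$. By the proposition, for each $j$ we have
\begin{displaymath}
h(\v_j) - h(\v_{j-1}) = \|Q_{i_j}\v_{j-1} - \v_{j-1}\|^2 \ge 0.
\end{displaymath}
Telescoping gives
\begin{displaymath}
h(M\v) - h(\v) = \sum_{j=1}^{k} \|\v_j - \v_{j-1}\|^2 \ge 0,
\end{displaymath}
which establishes the inequality.

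For the equality statement, the backward direction is immediate: if $M\v = \v$, then trivially $h(M\v) = h(\v)$. For the forward direction, I would observe that the telescoping sum above is a sum of non-negative terms, so $h(M\v) = h(\v)$ forces $\|\v_j - \v_{j-1}\|^2 = 0$ for each $j$. Hence $\v_0 = \v_1 = \cdots = \v_k$, and in particular $M\v = \v_k = \v$.

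I do not expect any serious obstacle here; the content is entirely captured by the proposition, and the only small subtlety is recognizing that to get the equality clause one must apply the proposition to each intermediate vector $\v_{j-1}$ rather than only to $\v$ itself. This is handled by the telescoping identity above.
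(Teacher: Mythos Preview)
Your argument is correct and is exactly the intended one: the paper states this result as a corollary with no written proof, leaving the telescoping induction over a word $M = Q_{i_k}\cdots Q_{i_1}$ implicit. Your handling of the equality case via the vanishing of each summand is precisely the point being left to the reader.
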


\begin{prop}\label{prop:Gamma_unit_circle}
  If $M\in \Gamma(\Omega)$, then $M$ cannot have eigenvalues on the
  unit circle except 1.
\end{prop}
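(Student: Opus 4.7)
The plan is to suppose for contradiction that $M \in \Gamma(\Omega)$ has an eigenvalue $\lambda$ on the unit circle with $\lambda \neq 1$, and then exploit Corollary \ref{cor:increasing}, which says that $h$ is strictly increasing along non-fixed orbits of $M$. The guiding intuition, already sketched in the introduction, is that an eigenvalue on the unit circle forces a rotation on a real invariant plane, while the Lyapunov-type height function $h$ cannot strictly increase along a bounded orbit.

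The case $\lambda = -1$ is essentially immediate: any nonzero eigenvector $\v$ with $M\v = -\v$ satisfies $h(M\v) = h(-\v) = h(\v)$, since $h$ is a quadratic form, so Corollary \ref{cor:increasing} forces $M\v = \v$, contradicting $\v\neq 0$. The main case is $\lambda = e^{i\theta}$ non-real. Since $M$ is real, $\bar\lambda$ is also an eigenvalue, and the real and imaginary parts of a complex eigenvector for $\lambda$ span a real two-dimensional $M$-invariant subspace $V \subset \R^{n+m}$. The restriction $M|_V$ has characteristic polynomial $(t-\lambda)(t-\bar\lambda)$ with distinct non-real roots of modulus $1$, so it is diagonalizable over $\mathbb{C}$ and conjugate over $\R$ to a rotation by angle $\theta$. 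Consequently every nonzero $M|_V$-orbit lies on an ellipse centered at the origin: it is bounded, bounded away from $0$, and $M|_V$ has no nonzero fixed vectors.

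The remaining work is to combine monotonicity with compactness. Fix any nonzero $\v \in V$. By Corollary \ref{cor:increasing} the sequence $h(M^k\v)$ is non-decreasing, and since the orbit is bounded and $h$ continuous, it is bounded above, hence convergent to some limit $L$. By Bolzano--Weierstrass some subsequence $M^{k_j}\v$ converges to a limit $\v_\infty \in V$, which is nonzero because the orbit avoids a neighborhood of the origin. By continuity of $h$ we get $h(\v_\infty) = L$, and by continuity of $M$ followed by $h$ we get $h(M\v_\infty) = \lim h(M^{k_j+1}\v) = L$, so
\begin{displaymath}
  h(M\v_\infty) = h(\v_\infty).
\end{displaymath}
A final application of Corollary \ref{cor:increasing} yields $M\v_\infty = \v_\infty$, contradicting the absence of nonzero fixed vectors for $M|_V$.

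The step I expect to be the main conceptual obstacle is recognizing that Corollary \ref{cor:increasing} should be used in this Lyapunov-function manner, with compactness of a rotational orbit supplying the convergent subsequence; once that idea is in hand, the remaining pieces (the real invariant plane, the ellipse orbit, the limit comparison) are routine linear algebra and continuity arguments.
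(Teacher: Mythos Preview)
Your proof is correct and follows essentially the same Lyapunov-function strategy as the paper: an eigenvalue on the unit circle yields a bounded real orbit along which $h$ is nondecreasing, and this is incompatible with the strict increase guaranteed by Corollary~\ref{cor:increasing}. The only cosmetic difference is that the paper exploits recurrence of the rotation to find powers $p_i$ with $M^{p_i}\v \to \v$ and derives the contradiction $h(M^{p_i}\v) \ge h(M\v) > h(\v)$ directly, whereas you pass via Bolzano--Weierstrass to an accumulation point $\v_\infty$ and show it must be fixed; your separate treatment of $\lambda=-1$ using $h(-\v)=h(\v)$ is a nice shortcut.
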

\begin{proof}
  Assume for contradiction that $M$ has an eigenvalue $\mu\ne 1$ on
  the unit circle. Then there exists $\v\in \R^{n+m}$ and a sequence
  $p_i \to \infty$ of positive integer powers such that $M\v \ne \v$
  and $M^{p_i}\v \to \v$. (If $\mu\ne -1$, choose $\v$ to be any
  nonzero vector in the two-dimensional invariant subspace on which
  $M$ acts by a rotation. If $\mu= -1$, choose $\v$ to be a
  corresponding eigenvector.) Therefore we have $h(M\v) > h(\v)$, and
  hence $h(M^{p_i}\v) \ge h(M(\v))$ for all $p_i$ by Corollary
  \ref{cor:increasing}. However, we have
  $h(M^{p_i}\v) \rightarrow h(\v)$ by continuity, which is a
  contradiction.
\end{proof}

\subsection{Coronal pseudo-Anosov mapping classes} Recall that a
pseudo-Anosov mapping class and its stretch factor $\lambda$ are
\emph{coronal} if $\lambda$ has a Galois conjugate on the unit
circle.

\begin{lemma}\label{lemma:powers}
  If a pseudo-Anosov mapping class $f$ is coronal, then each power
  of $f$ is also coronal.
\end{lemma}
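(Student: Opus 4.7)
The plan is to use the fact that the stretch factor of $f^n$ is $\lambda^n$, where $\lambda$ is the stretch factor of $f$, and to show directly that Galois conjugation commutes with taking $n$-th powers. The content of the lemma is therefore purely algebraic: I need to verify that if $\lambda$ admits a Galois conjugate on the unit circle, so does $\lambda^n$.

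Concretely, I would start by letting $\mu$ be a Galois conjugate of $\lambda$ on the unit circle and work inside the splitting field $K$ over $\mathbb{Q}$ of the minimal polynomial of $\lambda$. Since $K/\mathbb{Q}$ is a Galois extension and $\mu$ is a root of the minimal polynomial of $\lambda$, there exists $\sigma \in \mathrm{Gal}(K/\mathbb{Q})$ with $\sigma(\lambda) = \mu$. Applying this field automorphism to $\lambda^n$ yields $\sigma(\lambda^n) = \sigma(\lambda)^n = \mu^n$, and $|\mu^n| = |\mu|^n = 1$.

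To conclude, I would invoke the standard description of the Galois conjugates of an element $\alpha \in K$ as the orbit $\{\tau(\alpha) : \tau \in \mathrm{Gal}(K/\mathbb{Q})\}$; this immediately shows that $\mu^n$ is a Galois conjugate of $\lambda^n$ lying on the unit circle, so $\lambda^n$ is coronal and hence so is $f^n$. There is essentially no obstacle: no topology or matrix theory enters, only the fact that Galois automorphisms respect multiplication. The only small point worth noting is that $\mu^n$ is genuinely a nontrivial conjugate rather than equal to $\lambda^n$ itself, which is clear since $|\mu^n| = 1 < \lambda^n$.
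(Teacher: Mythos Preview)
Your proposal is correct and follows essentially the same argument as the paper: both pass to the splitting field of the minimal polynomial of $\lambda$, pick an automorphism $\sigma$ with $|\sigma(\lambda)|=1$, and use multiplicativity to get $|\sigma(\lambda^n)|=|\sigma(\lambda)|^n=1$. Your additional remark that $\mu^n\ne\lambda^n$ is harmless but unnecessary, since $\lambda>1$ already prevents $\lambda^n$ from lying on the unit circle.
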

\begin{proof}
  Let $\lambda$ be the stretch factor of $f$.  Let $\sigma$ be an
  automorphism of the Galois extension $L/\Q$ with
  $|\sigma(\lambda)|=1$, where $L$ is the splitting field of the
  minimal polynomial of $\lambda$. For all $k\ge 1$,
  we have $|\sigma(\lambda^k)| = |\sigma(\lambda)^k| = 1$. Therefore
  $\lambda^k$, the stretch factor of $f^k$, has a Galois conjugate
  $\sigma(\lambda^k)$ on the unit circle.
\end{proof}

As a consequence of Proposition \ref{prop:Gamma_unit_circle} and Lemma
\ref{lemma:powers}, we have the following.

\begin{cor}[First part of the Main Theorem]\label{cor:nopower}
A coronal pseudo-Anosov mapping class has no power coming from
Penner's construction.
\end{cor}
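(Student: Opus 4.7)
The plan is to argue by contradiction, stitching together Proposition \ref{prop:Gamma_unit_circle} and Lemma \ref{lemma:powers}. Suppose $f$ is a coronal pseudo-Anosov mapping class with stretch factor $\lambda$, and that some positive power $f^n$ arises from Penner's construction. By Lemma \ref{lemma:powers} the mapping class $f^n$ is still coronal, so there is an automorphism $\sigma$ of the Galois extension $L/\Q$ (where $L$ is the splitting field of the minimal polynomial of $\lambda$) such that $\mu := \sigma(\lambda^n) = \sigma(\lambda)^n$ lies on the unit circle.

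Next, since $f^n$ arises from Penner's construction, its stretch factor $\lambda^n$ is the Perron--Frobenius eigenvalue of some matrix $M \in \Gamma(\Omega)$ for an appropriate intersection matrix $\Omega$. Because $M$ has integer entries, its characteristic polynomial lies in $\Z[x]$ and is therefore divisible by the minimal polynomial of $\lambda^n$ over $\Q$. Hence every Galois conjugate of $\lambda^n$ is an eigenvalue of $M$; in particular $\mu$ is an eigenvalue of $M$ on the unit circle.

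The remaining task is to verify $\mu \ne 1$ so that Proposition \ref{prop:Gamma_unit_circle} supplies the desired contradiction. Assume for a moment that $\mu = 1$. Then $\sigma(\lambda)$ is an $n$-th root of unity and satisfies $x^n - 1 = 0$. Since $\lambda$ and $\sigma(\lambda)$ are Galois conjugates, they share the same minimal polynomial over $\Q$, so $\lambda$ itself would be a root of $x^n - 1$, forcing $\lambda = 1$ (as $\lambda$ is real and positive). This contradicts $\lambda > 1$, so indeed $\mu \ne 1$, and Proposition \ref{prop:Gamma_unit_circle} is violated.

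The proof is essentially a direct concatenation of the two preceding results, and I do not expect any substantive obstacle. The only nontrivial point is confirming that the unit-circle Galois conjugate supplied by Lemma \ref{lemma:powers} is not itself equal to $1$; this evaporates once one notes that sharing a minimal polynomial with a root of unity would force $\lambda$ to be a root of unity, which is incompatible with $\lambda$ being a pseudo-Anosov stretch factor.
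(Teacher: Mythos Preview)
Your argument is correct and follows exactly the route the paper intends: the paper simply records the corollary as an immediate consequence of Proposition~\ref{prop:Gamma_unit_circle} and Lemma~\ref{lemma:powers}, and you have supplied the details left implicit there (that the integer matrix $M$ has every Galois conjugate of $\lambda^n$ as an eigenvalue, and that such a conjugate cannot equal $1$). One minor remark: your verification of $\mu \ne 1$ can be shortened---if $1$ were a Galois conjugate of $\lambda^n$ then the (irreducible) minimal polynomial of $\lambda^n$ would be $x-1$, forcing $\lambda^n = 1$---but your version via $\sigma(\lambda)$ is equally valid.
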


To complete the proof of the Main Theorem, we need to show that
coronal pseudo-Anosov mapping classes exist on all but a few
exceptional surfaces. 

\begin{lemma}\label{lemma:covering}
  If there exists a coronal pseudo-Anosov mapping class on a surface
  $S$, and there is a branched covering $\widetilde{S}\to S$, then
  there exists a coronal pseudo-Anosov mapping class on $\tS$ as well.
\end{lemma}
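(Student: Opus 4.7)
The plan is to lift a suitable power of the given coronal pseudo-Anosov mapping class on $S$ through the branched cover $\widetilde{S}\to S$ and observe that the lift is again pseudo-Anosov with a coronal stretch factor.

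First I would recall the standard lifting principle for finite branched covers: if $p\co \widetilde{S}\to S$ is a finite branched covering and $\psi\co S\to S$ is a homeomorphism that preserves the branch locus setwise and whose action on $\pi_1$ of the unbranched complement preserves the subgroup corresponding to $p$, then $\psi$ lifts to a homeomorphism $\widetilde{\psi}\co \widetilde{S}\to \widetilde{S}$. For an arbitrary homeomorphism $\psi$, some power $\psi^n$ satisfies these combinatorial conditions, because the branch locus is finite and the relevant subgroup has finite index, so both data are preserved by a finite-index subgroup of $\langle \psi\rangle$.

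Next, starting from a coronal pseudo-Anosov $f$ on $S$ with representative $\psi$ and stretch factor $\lambda$, I would take a power $n$ so that $\psi^n$ lifts to $\widetilde{\psi}\co \widetilde{S}\to \widetilde{S}$. The invariant foliations $\F^u,\F^s$ of $\psi^n$ pull back under $p$ to transverse singular measured foliations $\widetilde{\F}^u,\widetilde{\F}^s$ on $\widetilde{S}$, and the stretching relations $\psi^n(\F^u)=\lambda^n\F^u$ and $\psi^n(\F^s)=\lambda^{-n}\F^s$ lift directly to $\widetilde{\psi}(\widetilde{\F}^u)=\lambda^n\widetilde{\F}^u$ and $\widetilde{\psi}(\widetilde{\F}^s)=\lambda^{-n}\widetilde{\F}^s$. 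Hence $\widetilde{\psi}$ is pseudo-Anosov on $\widetilde{S}$ with stretch factor $\lambda^n$. By Lemma \ref{lemma:powers}, $\lambda^n$ has a Galois conjugate on the unit circle, so the mapping class of $\widetilde{\psi}$ is the desired coronal pseudo-Anosov on $\widetilde{S}$.

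The step that requires the most care is the first: verifying that some power of $\psi$ genuinely lifts through a branched cover, and that the resulting lift is a single well-defined pseudo-Anosov (rather than, say, a reducible class permuting several pseudo-Anosov pieces). Since we only need existence of some coronal pseudo-Anosov on $\widetilde{S}$, it is enough to pick any lift among the finitely many possibilities; each such lift has invariant foliations obtained by pulling back those of $\psi^n$ and hence is pseudo-Anosov with stretch factor $\lambda^n$, completing the proof.
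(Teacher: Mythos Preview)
Your argument is correct and matches the paper's approach: take a power of $f$ that lifts through the branched cover, observe the lift is pseudo-Anosov with stretch factor $\lambda^n$, and apply Lemma~\ref{lemma:powers}. The paper compresses your first two paragraphs into a citation of \cite[Expos\'e 13, II.1.]{FLP}; the only imprecision in your write-up is the phrase ``for an arbitrary homeomorphism,'' since a power of $\psi$ is guaranteed to preserve the branch locus only under the (implicit, standard) assumption that the branch locus lies among the punctures of $S$.
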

\begin{proof}
  If $f \in \Mod(S)$ is a coronal pseudo-Anosov mapping class, then
  some power of $f$ can be lifted to a pseudo-Anosov mapping class
  $\widetilde{f}$ on $\tS$ with the same stretch factor as the power
  of $f$ (see \cite[Expos\'e 13,
  II.1.]{FLP}
  ). By Lemma \ref{lemma:powers}, $\widetilde{f}$ is also coronal.
\end{proof}

\begin{lemma}\label{lemma:puncturing}
  If there exists a coronal pseudo-Anosov mapping class on a surface
  $S$, then there exists a coronal pseudo-Anosov mapping class on the
  surface $S' = S\setminus\{p\}$ with one more puncture as well.
\end{lemma}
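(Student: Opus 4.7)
The plan is to produce a coronal pseudo-Anosov mapping class on $S' = S \setminus \{p\}$ by puncturing $S$ at a periodic point of a pseudo-Anosov representative of $f$ and then passing to the corresponding power. Let $\psi$ be a pseudo-Anosov representative of $f$ with stretch factor $\lambda$, and let $\F^u$, $\F^s$ be its invariant foliations. First, I would choose a periodic point $p$ of $\psi$ in the interior of $S$. Such a $p$ exists because pseudo-Anosov maps have dense sets of periodic points. When convenient, it is cleanest to take $p$ to be a periodic singularity of $\F^u$ and $\F^s$: the singularity set is finite and $\psi$-invariant, so provided $\psi$ has at least one interior singularity, some power of $\psi$ fixes one.

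Let $k$ be the period of $p$, so that $\psi^k$ fixes $p$ and restricts to a homeomorphism of $S'$. The next step is to argue that the resulting mapping class $\tilde f \in \Mod(S')$ is pseudo-Anosov with stretch factor $\lambda^k$. If $p$ is a singularity of the foliations, then $\F^u$ and $\F^s$ restrict verbatim to foliations on $S'$ with a prong singularity at the new puncture, and the dilatation identities $\psi^k(\F^u) = \lambda^k \F^u$ and $\psi^k(\F^s) = \lambda^{-k} \F^s$ carry over immediately, so the verification is trivial. If one is forced to take $p$ to be a regular periodic point, I would instead invoke the standard fact about puncturing pseudo-Anosovs at periodic points (as discussed, e.g., in Farb--Margalit, Section 14.1) to reach the same conclusion.

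Finally, by Lemma \ref{lemma:powers}, $f^k$ is coronal, i.e.\ $\lambda^k$ has a Galois conjugate on the unit circle. Since $\tilde f$ has stretch factor $\lambda^k$, it is coronal, completing the construction. The main obstacle is the middle step, namely verifying that puncturing at a periodic point genuinely yields a pseudo-Anosov mapping class with the same stretch factor; picking $p$ to be a periodic singularity reduces this to an essentially trivial check, whereas the general regular-point case rests on standard but more delicate puncturing machinery.
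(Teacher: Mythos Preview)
Your proposal is correct and follows essentially the same route as the paper: take a periodic point $p$ of a pseudo-Anosov representative $\psi$, pass to the power $\psi^k$ fixing it, observe that the induced map on $S\setminus\{p\}$ is pseudo-Anosov with stretch factor $\lambda^k$, and invoke Lemma~\ref{lemma:powers} to conclude coronality. The paper is slightly more terse---it simply cites standard references for the existence of a fixed point of some power and for the pseudo-Anosov property on the punctured surface---whereas you add the (optional) refinement of choosing $p$ to be a singularity to make the middle verification immediate; this is a cosmetic difference, not a substantive one.
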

\begin{proof}
  Let $f \in \Mod(S)$ be a coronal pseudo-Anosov mapping class with
  stretch factor $\lambda$ and let $\psi$ be its representative
  homeomorphism. Some power $\psi^k$ has a fixed point $p$ (see
  \cite[Proposition 9.20]{FLP} or \cite[Theorem
  14.19]{FarbMargalit12}), and hence $\psi^k$ induces a pseudo-Anosov
  homeomorphism of $S\setminus\{p\}$ with coronal stretch factor
  $\lambda^k$.
\end{proof}

\begin{prop}[Second part of the Main Theorem]
  \label{prop:constructions}
  There exists a coronal pseudo-Anosov mapping class on $S_{g,n}$ when
  $3g+n \geq 5$.
\end{prop}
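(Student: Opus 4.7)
The plan is to bootstrap from two known base examples --- coronal pseudo-Anosov mapping classes on $S_{2,0}$ and $S_{0,5}$, both of which exist in the literature (for instance, pseudo-Anosov maps with Salem-number stretch factors are coronal by definition) --- to every $S_{g,n}$ with $3g+n\geq 5$, by applying Lemmas \ref{lemma:covering} and \ref{lemma:puncturing}. The argument splits naturally by genus.

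The case $g=0$, $n\geq 5$ is immediate: iterate Lemma \ref{lemma:puncturing} starting from $S_{0,5}$. The case $g\geq 2$ is handled by starting from $S_{2,0}$: every closed surface $S_{g,0}$ with $g\geq 2$ admits an unbranched cover $S_{g,0}\to S_{2,0}$ of degree $g-1$ (via an index-$(g-1)$ subgroup of $\pi_1(S_{2,0})$, with $g=2$ trivial), so Lemma \ref{lemma:covering} yields a coronal pA on $S_{g,0}$, and Lemma \ref{lemma:puncturing} then provides coronal pA on $S_{g,n}$ for all $n\geq 1$.

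For $g=1$, $n\geq 2$, the argument is more delicate. When $n\geq 6$, one realizes $S_{1,6}$ as the connected unbranched double cover of $S_{0,5}$ whose monodromy is non-trivial around exactly four of the five punctures (a Riemann--Hurwitz count, together with the fact that each puncture with non-trivial monodromy lifts to one puncture and each puncture with trivial monodromy lifts to two, confirms the total space has genus $1$ and six punctures); then Lemma \ref{lemma:covering} followed by iterated Lemma \ref{lemma:puncturing} handles all $n\geq 6$. The main obstacle is the remaining range $2\leq n\leq 5$: an Euler-characteristic comparison shows that $S_{1,n}$ for $n<6$ is not a non-trivial branched cover of $S_{0,5}$, and it is likewise not a non-trivial cover of $S_{2,0}$ since $|\chi(S_{1,n})|<4$ while every proper cover of $S_{2,0}$ has $|\chi|\geq 4$. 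So the only option here is to exhibit a coronal pA on $S_{1,2}$ directly --- for example, by writing down an explicit filling pair of multicurves and verifying that the corresponding Penner product has a Perron--Frobenius eigenvalue whose minimal polynomial has a root on the unit circle --- and then extending to $n=3,4,5$ via Lemma \ref{lemma:puncturing}.
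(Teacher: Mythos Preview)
Your treatment of $g=0$ and $g\geq 2$ matches the paper. The genus-one case, however, contains a genuine gap, and the proposed patch is self-defeating: you suggest obtaining a coronal pseudo-Anosov on $S_{1,2}$ via a Penner product whose stretch factor has a Galois conjugate on the unit circle, but this is exactly what Proposition~\ref{prop:Gamma_unit_circle} forbids. No element of $\Gamma(\Omega)$ has an eigenvalue on the unit circle other than $1$, so Penner's construction \emph{never} produces a coronal stretch factor. The method you propose is ruled out by the very result this proposition is meant to complement.

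The Euler-characteristic obstruction you cite is also too restrictive: it treats the punctures as actual punctures and hence only excludes covers in which every preimage of a marked point is again marked. The paper instead regards $S_{0,5}$ and $S_{1,2}$ as closed surfaces with marked points and uses the degree-two branched cover $S_{1,2}\to S_{0,5}$ induced by the hyperelliptic involution of the torus exchanging the two marked points: four of the five marked points on the sphere are the branch values, and their preimages on the torus are left \emph{unmarked}, while the two marked points of $S_{1,2}$ sit over the fifth. Lemma~\ref{lemma:covering} then yields a coronal pseudo-Anosov on $S_{1,2}$ directly, and Lemma~\ref{lemma:puncturing} handles all $n\geq 2$. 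Your unbranched double cover $S_{1,6}\to S_{0,5}$ is in fact this same map with all six preimages marked; the paper's marking convention is what lets it reach $S_{1,2}$.
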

\begin{proof}
  On $S_{2,0}$ there is a coronal pseudo-Anosov mapping class with
  stretch factor the Perron root of the polynomial $x^4-x^3-x^2-x+1$
  \cite{Zhirov95}. For each $g\ge 3$ there is an unbranched covering
  of $S_{2,0}$ by $S_{g,0}$. It follows from Lemma
  \ref{lemma:covering} and Lemma \ref{lemma:puncturing} that there
  exists a coronal pseudo-Anosov mapping class on all $S_{g,n}$ with
  $g\ge 2$ and $ n \geq 0$.

  For the genus 0 cases, start from a coronal pseudo-Anosov mapping
  class on $S_{0,5}$ with stretch factor the Perron root of
  $x^4-2x^3-2x+1$ \cite{LanneauThiffeault11Braids}. By Lemma
  \ref{lemma:puncturing}, there exists a coronal pseudo-Anosov mapping
  class on $S_{0,n}$ for each $n\ge 5$. 

  Finally, there is a branched covering $S_{1,2} \to S_{0,5}$, induced
  by the hyperelliptic involution of $S_{1,2}$ exchanging the two
  punctures, which yields a coronal pseudo-Anosov mapping class on
  $S_{1,2}$ by Lemma \ref{lemma:covering}. (Technically, here
  $S_{1,2}$ and $S_{0,5}$ should be considered surfaces with marked
  points, not punctures. Because the theory of pseudo-Anosov maps and
  stretch factors is the same on surfaces with punctures and on
  surfaces with marked points, we can go back and forth between marked
  points and punctures as is convenient.) By Lemma
  \ref{lemma:puncturing}, there exists a coronal pseudo-Anosov mapping
  class on $S_{1,n}$ for $n\ge 2$.
\end{proof}

The Main Theorem immediately follows from Corollary \ref{cor:nopower}
and Proposition \ref{prop:constructions}.

\section{Remarks on the Galois conjugates of stretch factors}

\paragraph{Examples of coronal pseudo-Anosov mapping classes}
The set of coronal pseudo-Anosov mapping classes is presumably much
larger than the set of examples constructed above. For example, the
minimal pseudo-Anosov stretch factors tend to be coronal. In fact,
when $g = 2,3,4,5,7,8$, the minimal stretch factor on $S_g$ among
pseudo-Anosov mapping classes with orientable foliations are known,
and they are all coronal \cite{LanneauThiffeault11}. The minimal
pseudo-Anosov stretch factors on the surfaces
$S_{0,n}$ for $5\le n\le 9$ are also all coronal with the exception of
$n=8$ \cite{LanneauThiffeault11Braids}.

Not only is the set of coronal pseudo-Anosov mapping classes infinite,
but so is the set of coronal stretch factors (even modulo taking
powers). This follows from the first author's examples of
pseudo-Anosov mapping classes on $S_{g}$ with stretch factor a degree
$2g$ Salem number \cite{Shin14}. Hironaka's infinite family of
pseudo-Anosov mapping classes coming from the fibration of a single
3--manifold \cite{Hironaka10} also seem to consist mostly of coronal
pseudo-Anosov mapping classes whose stretch factors can have
arbitrarily high algebraic degree.

The abundance of coronal pseudo-Anosov mapping classes are also
suggested by computer experiments of Nathan Dunfield and Giulio Tiozzo
on random walks in the group of braids with 10 and 14 strands. Using
the standard Artin generators, mean length 25, variance 9, and a
sample of 100,000 pseudo-Anosov mapping classes, 94\% of the stretch
factors had Galois conjugates on the unit circle. Computer experiments
also show that a random reciprocal polynomial is very likely to have a
root on the unit circle. This may suggest that pseudo-Anosov mapping
classes arising from Penner's construction are actually rare.

\paragraph{Location of Galois conjugates}
It would be interesting to know precise constraints on the location of
Galois conjugates of pseudo-Anosov stretch factors arising from
Penner's construction. In particular, we wonder if they can at least
approach the unit circle or if they are even dense in $\C$. A positive
answer would imply that Galois conjugates of all pseudo-Anosov stretch
factors are dense in $\C$, which is also suggested by the experiments
of Dunfield and Tiozzo.

\appendix

\section{Penner's conjecture for the exceptional surfaces}

In this appendix, we show that Penner's construction is true for
$S_{1,0}$ and $S_{1,1}$ and that it is false for $S_{0,4}$, but a
modified version of the conjecture, allowing half-twists in addition
to Dehn twists in Penner's construction, is true.

\paragraph{The curve complex} Let $S$ be one of these three surfaces. The
modified curve complex $\mC(S)$ is a graph with vertices the isotopy
classes of simple closed curves on $S$, where two vertices are
connected by an edge if they have minimal intersection number (one for
$S_{1,0}$ and $S_{1,1}$, and two for $S_{0,4}$). In all three cases,
$\mC(S)$ is isomorphic to the 1-skeleton of the Farey tessellation
$\F$ of the hyperbolic plane (Figure \ref{figure:farey}). For more
details, see \cite[Section 4.1.1]{FarbMargalit12}.

\paragraph{The action of $\Mod(S)$} Let us consider the action of
$\Mod(S)$ on $\mC(S)$, which gives rise to a homomorphism
\begin{displaymath}
A \co \Mod(S) \to \Isom^+(\F) \cong \PSL(2,\Z)
\end{displaymath}
of $\Mod(S)$ to the orientation-preserving isometries of $\F$, once an
identification of $\mC(S)$ with $\F$ is chosen. We denote the image of
an element $f \in \Mod(S)$ by $A_f$.

\paragraph{Actions of Dehn twists} We call an ideal triangle in the
complement of $\F$ a \emph{tile}. A rotation of $\F$ about a vertex
$v$ of $\F$ to the left by $k$ tiles is defined as the parabolic
element of $\Isom^+(\F)$ that fixes $v$ and shifts the tiles adjacent to
$v$ in counterclockwise direction by $k$. Rotations to the right are
defined analogously.

For a Dehn twist $T_c$ about a curve $c$ in $S$, the isometry $A_{T_c}$
is parabolic, and it fixes the vertex of $\F$ corresponding to
$c$. Depending on the choice of identification of $\mC(S)$ with $\F$,
positive Dehn twists can act by rotating $\F$ to the left or to the
right. We choose the identification so that positive Dehn twists
correspond to rotations to the right and negative Dehn twists
correspond to rotations to the left. Note that Dehn twists on
$S_{1,0}$ and $S_{1,1}$ act by rotations by one tile, but Dehn twists
on $S_{0,4}$ act by rotations by two tiles. It is the half-twists on
$S_{0,4}$ that correspond to rotations by one tile.

\begin{figure}[tp] 
\labellist
\pinlabel $a$ at 30 30
\pinlabel $b$ at 77 1
\pinlabel $e_0$ at 70 28
\pinlabel {$A_f(e) = e_5$} at 174 192
\pinlabel $\gamma$ at 240 190
\pinlabel {$e_1$} [ ] at 93 53
 \pinlabel {$e_2$} [ ] at 70 112
 \pinlabel {$e_3$} [ ] at 117 154
 \pinlabel {$e_4$} [ ] at 185 136
 \pinlabel {$t_1$} [ ] at 89 36
 \pinlabel {$t_2$} [ ] at 69 81
 \pinlabel {$t_3$} [ ] at 96 134
 \pinlabel {$t_4$} [ ] at 154 114
 \pinlabel {$t_5$} [ ] at 171 173
\endlabellist
\centering
\includegraphics[width=10cm]{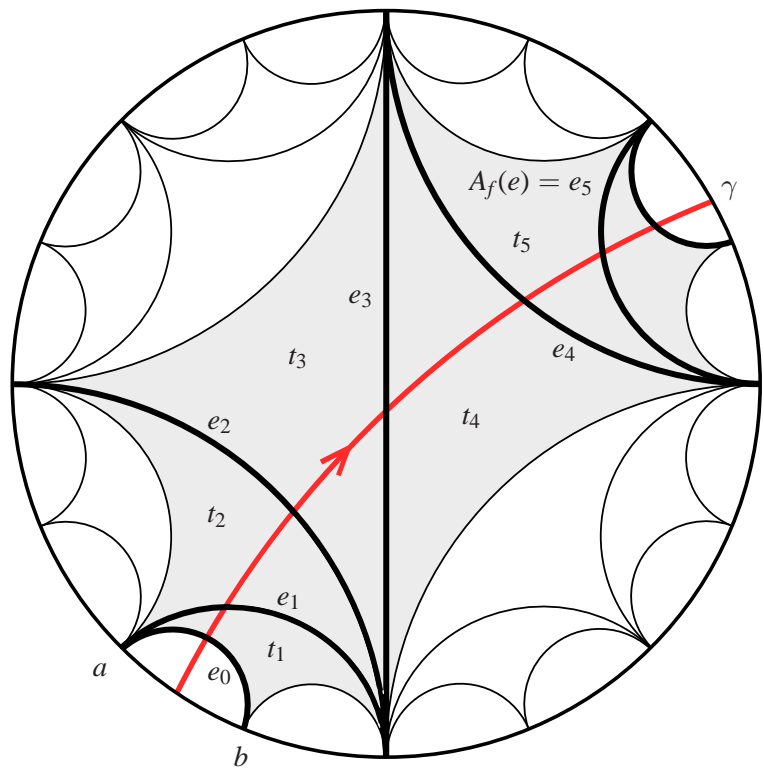}
\caption{The action $A_f$ of a pseudo-Anosov mapping class $f$ on the
  Farey tessellation. }
\label{figure:farey}
\end{figure}

\paragraph{Actions of pseudo-Anosov elements} For a pseudo-Anosov element
$f \in \Mod(S)$, the isometry $A_f$ is hyperbolic in $\PSL(2,\Z)$ and hence
$A_f$ has an invariant geodesic on the hyperbolic plane, called the 
\textit{axis} $\gamma$ of $A_f$. Since $f$ does not
fix any curve on $S$, $A_f$ does not fix any vertex of $\F$. In particular,
the endpoints of the axis $\gamma$ of $A_f$ are not vertices of $\F$.
Therefore $\gamma$ traverses a bi-infinite sequence of triangles in the
Farey tessellation, and it cuts two sides of each triangle.

Associated to $f$, there is a bi-infinite sequence of letters $L$ and
$R$ obtained as follows: travel along $\gamma$ in the direction of the
translation, and for each triangle record if the common vertex of the
cut sides are on the left or the right side of $\gamma$. This sequence is
periodic, because $A_f$ is a translation along $\gamma$. As the following
lemma shows, this bi-infinite sequence encodes how the hyperbolic
isometry $A_f$ can be written as a composition of parabolic
isometries.

\begin{lemma}\label{lemma:isometry_as_product}
  Let $e_0$ be an edge of $\F$ intersecting $\gamma$. Let $a$ and $b$ be
  the endpoints of $e_0$ on the left and right hand side of $\gamma$,
  respectively. Let $e_1, e_2, \ldots, e_n = A_f(e_0)$ be edges of $\F$
  intersected by $\gamma$ such that $e_{k-1}$ and $e_k$ are different sides
  of an ideal triangle $t_k$ of $\F$ for all $1\le k \le n$. (See
  Figure \ref{figure:farey} for an illustration when $n=5$.) For all
  $1\le k \le n$, define $s_k$ to be the letter $L$
  or the letter $R$ depending on whether the common vertex of
  $e_{k-1}$ and $e_k$ is on the left or right side of $\gamma$.

  Let $\tau_a$ and $\tau_b$ be the rotations of $\F$ by one tile to
  the right about the points $a$ and $b$, respectively, and introduce
  the notation
  \begin{displaymath}
    \tau(s) =
    \begin{cases}
      \tau_a^{-1} & \mbox{if } s = L\\
      \tau_b & \mbox{if } s = R.\\
    \end{cases}
  \end{displaymath}
  Then
  \begin{displaymath}
    A_f = \tau(s_1) \circ \cdots \circ \tau(s_n).
  \end{displaymath}
  (By the usual convention for composition of functions, the rotations
  are applied in right-to-left order.)
\end{lemma}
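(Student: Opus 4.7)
The plan is to prove this lemma by induction on $n$, after first reducing to a standard configuration via the transitive action of $\Isom^+(\F) \cong \PSL(2,\Z)$ on oriented edges of $\F$.

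First I would normalize: by transitivity, conjugate by an isometry of $\F$ to arrange that $a = \infty$, $b = 0$, and $e_0$ is the vertical edge from $0$ to $\infty$ in the upper half-plane model. The rotations $\tau_a$ and $\tau_b$ then become explicit parabolic matrices fixing $\infty$ and $0$ respectively, and the axis $\gamma$ becomes a semicircle crossing $e_0$. The sequence of edges $e_k$, triangles $t_k$, the cutting sequence $s_1, \ldots, s_n$, and the isometry $A_f$ all transform naturally under this conjugation, so it suffices to prove the identity in the standard model.

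Set $C_k := \tau(s_1) \circ \cdots \circ \tau(s_k)$ with $C_0 = \mathrm{id}$. I would prove by induction on $k$ that $C_k$ is the unique orientation-preserving isometry of the hyperbolic plane sending the oriented edge $e_0$ (from $b$ to $a$) to the oriented edge $e_k$ (from $b_k$ to $a_k$, where $a_k, b_k$ are the endpoints of $e_k$ on the left and right of $\gamma$ respectively). Since an orientation-preserving isometry of the hyperbolic plane is determined by its action on an oriented edge, and since $A_f$ sends oriented $e_0$ to oriented $e_n$, the case $k = n$ yields $A_f = C_n$.

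The inductive step reduces, via $C_k = C_{k-1} \circ \tau(s_k)$, to checking the equality $\tau(s_k)(e_0) = C_{k-1}^{-1}(e_k)$. Geometrically, since $C_{k-1}$ is orientation-preserving and maps oriented $e_0$ to oriented $e_{k-1}$, it must also carry the forward direction of $\gamma$ at $e_0$ to the forward direction of $\gamma$ at $e_{k-1}$, and hence the forward-adjacent triangle $t_1$ of $e_0$ to the forward-adjacent triangle $t_k$ of $e_{k-1}$. Therefore $C_{k-1}^{-1}(e_k)$ is the unique side of $t_1$ other than $e_0$ sharing the appropriate vertex with $e_0$: vertex $a$ when $s_k = L$ and vertex $b$ when $s_k = R$. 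An explicit calculation in the normalized model shows this edge is exactly $\tau_a^{-1}(e_0)$ or $\tau_b(e_0)$, i.e.\ $\tau(s_k)(e_0)$.

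The main obstacle I expect is the orientation bookkeeping: one must confirm that the conventions in the definitions of $\tau_a$ and $\tau_b$ (``rotation to the right'' meaning clockwise tile-shift) pair correctly with the labelling of $L$ and $R$ in the cutting sequence, so that the letter $L$ invokes $\tau_a^{-1}$ rather than $\tau_a$, and $R$ invokes $\tau_b$ rather than $\tau_b^{-1}$. I would pin these signs down by tracing through one concrete example in the upper half-plane -- for instance, with forward-adjacent triangle $t_1 = (0,1,\infty)$ and the parabolic $z \mapsto z+1$ sending $e_0 = (0,\infty)$ to the edge from $1$ to $\infty$ -- after which the inductive argument goes through uniformly.
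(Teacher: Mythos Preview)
Your proposal is correct and follows essentially the same route as the paper: both arguments set $\phi_k$ (your $C_k$) to be the unique orientation-preserving isometry carrying the oriented edge $e_0$ to the oriented edge $e_k$, prove by induction that $\phi_k = \tau(s_1)\circ\cdots\circ\tau(s_k)$, and handle the inductive step by observing that $\phi_{k-1}$ must carry the forward-adjacent triangle $t_1$ to $t_k$ so that $\phi_{k-1}^{-1}(e_k)$ is the side of $t_1$ sharing the correct vertex with $e_0$. Your normalization to $a=\infty$, $b=0$ is a harmless extra step the paper omits, and your explicit concern about the $L/R$ sign conventions is exactly what the paper dispatches in its base case $k=1$.
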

\begin{proof}
  For all $1 \le k \le n$, there is a unique $\phi_k \in \Isom^+(\F)$
  that maps $e_0$ to $e_k$ and $a$ to the endpoint of $e_k$ lying on
  the left hand side of $\gamma$. We have $\phi_n = A_f$, so we need
  to prove that $\phi_n = \tau(s_1)\circ \cdots \circ \tau(s_n)$. We
  will prove by induction that
  $\phi_k = \tau(s_1)\circ \cdots \circ \tau(s_k)$ for all
  $1 \le k \le n$.

  For $k =1$, we can easily see that the isometry
  mapping $e_0$ to $e_1$ is $\tau_a^{-1}$ or $\tau_b$, depending on
  whether the common vertex of $e_0$ and $e_1$ is $a$ or $b$.

  Now assume that the claim is true for $k$ where $1\le k < n$, that
  is, $\phi_k = \tau(s_1)\circ \cdots \circ \tau(s_k)$. We want to
  show that $\phi_{k+1} = \phi_k \circ \tau(s_{k+1})$. Note that the
  edges $e_0$ and $\tau(s_{k+1})(e_0)$ of $t_1$ meet on the same side
  of $\gamma$ as the edges $e_k$ and $e_{k+1}$ of $t_{k+1}$. Since we
  have $\tau(s_k)(e_0) = e_k$ by the induction hypothesis, this
  implies
  \begin{displaymath}
    \phi_k(\tau(s_{k+1})(e_0)) = e_{k+1}.
  \end{displaymath}
  The right hand side can also be written as
  $\phi_{k+1}(e_0)$, therefore 
  $\phi_k \circ \tau(s_{k+1})$ and $\phi_{k+1}$ map $e_0$ to the same
  edge, and their actions on the endpoints also agree. Hence we have
  $\phi_{k+1} = \phi_k \circ \tau(s_{k+1})$ as claimed.
\end{proof}

\paragraph{Proof of Penner's conjecture for $S_{1,0}$, $S_{1,1}$ and
  $S_{0,4}$} Let $S$ be one of these three surfaces and let $f$ be
any pseudo-Anosov element of $\Mod(S)$. We want to show that $f$ has a
power arising from Penner's construction. 

Let $A_f$, $\gamma$, $e_0$, $a$ and $b$ be as above. Choose $a$ and $b$ for
the role of the filling curves in the construction. By Lemma
\ref{lemma:isometry_as_product}, some product of $T_a^{-1}$ and $T_b$
defines an element $h\in \Mod(S)$ such that $A_f = A_h$. Since $A_h$
is hyperbolic, both Dehn twists must appear in this product.
Therefore $h$ is a pseudo-Anosov mapping class arising from Penner's
construction.
 
When $S$ is the torus or the once-punctured torus, we have
$\Mod(S) \cong \SL(2,\Z)$. So $f = \pm h$ and hence $f^2 = h^2$. When
$S = S_{0,4}$, then $A$ is surjective with kernel
$\Z/2\Z \times \Z/2\Z$ \cite[Prop.~2.7]{FarbMargalit12}. The kernel is
generated by two hyperelliptic involutions and only its identity
element fixes all four punctures. Thus two elements of $\Mod(S_{0,4})$
that project to the same element of $\PSL(2,\Z)$ are equal if they
permute the four punctures in the same way. Therefore
$f^{12} = h^{12}$, because both maps act trivially on the punctures.
(The number 12 is the least common multiple of the orders of elements
of the symmetric group on 4 points.) Hence $f$ has a power arising
from Penner's construction.

\paragraph{Remark} Note that without allowing half-twists, the conjecture
is false for $S_{0,4}$. Indeed, an $LR$-sequence corresponding to a
product of Dehn twists is a sequence of $LL$ and $RR$ blocks. So any
pseudo-Anosov mapping class that contains the block $LRL$ in its
sequence does not have a power arising from the construction.

%
%
%
\bibliographystyle{alpha} 
\bibliography{mybibfile}
%



\end{document}